\journal{Journal of Computational and Applied Mathematics}
\date{June 7, 2021}
\newcommand{\qi}{\mathbf{i}}
\newcommand{\qj}{\mathbf{j}}
\newcommand{\qk}{\mathbf{k}}
\newcommand{\qqq}{\mathbf{q}}
\newcommand{\qI}[1]{\mathbf{I}_{#1}}
\newcommand{\qJ}[1]{\mathbf{J}_{#1}}
\newcommand{\qK}[1]{\mathbf{K}_{#1}}
\newcommand{\qQ}[1]{\mathbf{Q}_{#1}}
\newcommand{\spacealg}{\mathcal{G}(\mathbb{R}_{3})}
\DeclareMathOperator{\atantwo}{atan2}
\DeclareMathOperator\arctanh{arctanh}
\DeclareMathOperator\Ln{Ln}
\DeclareMathOperator\Arg{Arg}
\newtheorem{thm}{Theorem}
\newtheorem{lemma}[thm]{Lemma}
\newproof{proof}{Proof}
\newproof{pot}{Proof of Theorem \ref{thm2}}
\begin{document}

\begin{frontmatter}

\title{Quaternionic Step Derivative: Machine Precision Differentiation of Holomorphic Functions using Complex Quaternions}

\author[1]{Martin Roelfs\corref{cor1}
\fnref{fn1}}
\ead{martin.roelfs@kuleuven.be}
\author[1,2]{David Dudal\fnref{fn1}}
\ead{david.dudal@kuleuven.be}
\author[3]{Daan Huybrechs\fnref{fn2}}
\ead{daan.huybrechs@kuleuven.be}

\cortext[cor1]{Corresponding author}
\fntext[fn1]{The research of M.R.~and D.D.~is supported by KU Leuven IF project C14/16/067.}
\fntext[fn2]{The research of D.H.~is supported by KU Leuven IF project C14/15/055.}
\address[1]{KU Leuven Campus Kortrijk--Kulak, Department of Physics, Etienne Sabbelaan 53 bus 7657, 8500 Kortrijk, Belgium}
\address[2]{Ghent University, Department of Physics and Astronomy, Krijgslaan 281-S9, 9000 Gent, Belgium}
\address[3]{KU Leuven, Department of Computer Science, Celestijnenlaan 200A, BE-3001 Leuven, Belgium}

\begin{abstract}
The known Complex Step Derivative (CSD) method allows easy and accurate differentiation up to machine precision of real analytic functions by evaluating them a small imaginary step next to the real number line. 
The current paper proposes that derivatives of holomorphic functions can be calculated in a similar fashion by taking a small step in a quaternionic direction instead.
It is demonstrated that in so doing the CSD properties of high accuracy and convergence are carried over to derivatives of holomorphic functions. 
To demonstrate the ease of implementation, numerical experiments were performed using complex quaternions, the geometric algebra of space, and a $2 \times 2$ matrix representation thereof.
\end{abstract}

\begin{keyword}
  Automatic Differentiation \sep Complex Analysis \sep Complex Step Derivative \sep Complex Quaternions \sep Geometric Algebra \sep Clifford Algebra
\end{keyword}

\end{frontmatter}


\section{Introduction}

The Complex Step Derivative (CSD) of a real analytic function $f(x)$ works by evaluating the function not on the real number line, but by taking a small imaginary step $ih$ beside it instead~\cite{squire}. From the Taylor series we see that
    \begin{equation}
        f(x+ih) = f(x) + i h f'(x) - \frac{h^2}{2} f''(x) + \order{h^3}.
    \end{equation}
Therefore, if $h$ is chosen sufficiently small the imaginary part gives the first derivative up to corrections of $\order{h^2}$:
    \begin{equation}
         f'(x) = \frac{1}{h}\Im\bqty{f(x+ih)} + \order{h^2}.
         \label{eq:csd_derivative}
    \end{equation}
If the function can be accurately evaluated at the complex point $x+ih$, this method is actually convergent and therefore $h$ can be made arbitrarily small, leading to derivatives accurate to machine precision. This is not the case for finite difference based techniques, since eventually $f(x)$ and $f(x+h)$ become indistinguishable and therefore $f(x+h) - f(x)$ equals zero in floating point arithmetic.

Although very accurate and easily implementable, CSD does not work for complex functions $f: \mathbb{C} \to \mathbb{C}$, since the imaginary part is no longer directly identifiable with the derivative.
However, it is possible to recover a similar rule by taking a small step in a quaternionic direction instead:
    \begin{equation}\label{eq:quaternion_step}
        f(z+\qj h) = f(z) + h \qj f'(z) - \frac{h^2}{2} f''(z) + \order{h^3},
    \end{equation}
where $\qj$ is a unit quaternion which, like the imaginary unit, satisfies $\qj^2=-1$. This enables instant derivation of holomorphic functions and, by simple extension, of piecewise holomorphic functions (piecewise functions whose pieces are complex differentiable, i.e., analytic). Just like CSD this technique is convergent and in many cases gives the first derivative up to machine precision with only a single function evaluation. This method will be referred to as Quaternionic Step Derivation (QSD).

Compared to existing generalizations of CSD, which we briefly review further on, QSD can be implemented using standard libraries and available software. If software support for quaternions is not available, we will show that one can switch to matrix representations: derivatives can be evaluated using a single function evaluation with a matrix argument. Support for polynomials and special functions with matrix arguments is included in several existing scientific libraries, such as \verb|expm|, \verb|sinm| and \verb|cosm| in \verb|SciPy|, or \verb|expm| in \verb|Matlab|.

This paper is organized as follows. Firstly, \S\ref{sec:qsd} introduces the quaternions using the geometric algebra of 3 dimensional space, discusses their matrix representations, and the formal conditions under which QSD is valid. Secondly, \S\ref{sec:results} discusses the application of QSD to several trial functions. Lastly, we summarize the most important conclusions in \S\ref{sec:conclusion}.

\section{Quaternionic Step Derivation} \label{sec:qsd}
The goal is to find the derivative of an analytic function $f(z) \in \mathbb{C}$ with $z \in \mathbb{C}$. In order to do this we would like to introduce another imaginary unit $\qj$ such that
    \begin{equation}
        f(z+h\qj) = f(z) + h\qj f'(z)  - \frac{h^2}{2} f''(z) + \order{h^3}.
        \label{eq:qsd_def}
    \end{equation}
The obvious thing to try is to introduce the quaternions $\qi$, $\qj$ and $\qk$ which all satisfy $\qi^2 = \qj^2 = \qk^2 = -1$ and to build the complex numbers as $z = a + b \qi$ and to use e.g. $\qj$ as the new variable. However, this does not work because $\qi$~and~$\qj$ anti-commute: $\qi\qj + \qj\qi = 0$. 
As a consequence the square of any normalized quaternion $\qqq = c_\qi \qi + c_\qj \qj + c_\qk \qk$ with $c_\qi^2 + c_\qj^2 + c_\qk^2 = 1$ is $-1$:
    \[ \qqq = \pqty{c_\qi \qi + c_\qj \qj + c_\qk \qk}^2 = -1. \]
Ergo, the quaternion $\qqq$ as a whole behaves as a unit imaginary, and $\qi$~and~$\qj$ cannot be considered as separate imaginary units.
Therefore we need two commuting imaginary units.

A straightforward way to do this is to introduce multicomplex numbers using the recursive definition
\begin{equation}
    \mathbb{C}^n = \Bqty{z_1 + z_2 i_n \vert z_1, z_2 \in \mathbb{C}^{n-1}},
\end{equation}
where $i_n^2=-1$ and $\mathbb{C}^0 := \mathbb{R}$. Lantoine et al. have demonstrated the utility of this approach \cite{Lantoine} for taking higher order derivatives of real analytic functions, and it is clear that defining bicomplex numbers
\begin{equation}
    \mathbb{C}^2 = \Bqty{(a_1 + i_1 b_1) + i_2 (a_2 + i_1 b_2) \, \vert \, a_1, a_2, b_1, b_2 \in \mathbb{R}, i_1^2 = i_2^2 = -1}
\end{equation}
allows the computation of the first derivative of a complex function using \eqref{eq:qsd_def}.

However, an alternative solution becomes apparent by considering the geometric algebra $\spacealg$, whose bivectors are the quaternions, and whose pseudoscalar is a commuting imaginary unit \cite{GA4CS,STA}. The connection between $\spacealg$ and quaternions is well-known, but we include a derivation here from basic principles. This allows us to shed light on the choices one has for a quaternionic direction, which we will explore further afterwards.

\subsection{Commuting imaginary units based on geometric algebra}

The geometric algebra over a vector space distinguishes itself by the definition of the geometric product, which combines the dot and exterior product between vectors, to multiply vectors into multivectors. 
Typically, and for the remainder of this article, the geometric product is written using juxtaposition.

In order to construct $\spacealg$, we start with the orthogonal basis vectors $e_i$ with $i=1,2,3$, and impose that $e_i^2 = 1$. 
But since there is nothing special about these particular basis vectors, any linear combination of them must also square to a scalar. This is called the \emph{contraction axiom}: if $x = ae_1 + be_2 + ce_3$, then $x^2 \in \mathbb{R}$, where $a,b,c \in \mathbb{R}$. Explicit calculation gives
    \begin{equation}
        x^2 = \pqty{a^2 + b^2 + c^2} + ab \pqty{e_{12} +e_{21}} + ac \pqty{e_{13} +e_{31}} + bc \pqty{e_{23} + e_{32}}.
    \end{equation}
Thus, $e_{i}e_{j} = - e_{j}e_{i}$ in order for $x^2$ to be a real number. Hence, orthogonal vectors anti-commute. The product of two orthogonal basis vectors does not reduce to a scalar but is a \emph{bivector}:
    \begin{equation}
        e_{ij} := e_i e_j.
    \end{equation}
Using the anticommutivity of orthogonal vectors we compute the square of the bivectors and find
	\begin{equation}
		e_{ij}^2 = e_{i} e_{j} e_{i} e_{j}= - e_{i}^2 e_{j}^2 = -1.
	\end{equation}
The bivectors are therefore of imaginary type and can be associated with the quaternions: 
    \begin{equation}
        \qi := e_{32}, \quad \qj := e_{13}, \quad \qk := e_{21}.
    \end{equation}
But it is also possible to multiply all three basis vectors together, which gives the \emph{pseudoscalar} $e_{123}$:
    \begin{equation}
        i := e_1 e_2 e_3 = e_{123}.
    \end{equation}
This element not only squares to $-1$, but it also commutes with all elements of the algebra, which is straightforward to demonstrate using the anti-commutivity of orthogonal vectors. This makes it a perfect candidate for the regular complex unit $i := e_{123}$.
In terms of $i=e_{123}$ the quaternions can also be written as
    \begin{equation}
        \qi = -i e_1, \quad \qj = -i e_2, \quad \qk = -i e_3.
        \label{eq:quaternions}
    \end{equation}
The Cayley table of $\spacealg$ is shown in \cref{tab:cayley}.
    \begin{table}
        {\footnotesize \caption{Cayley table of $\spacealg$. This algebra has four elements of imaginary type. The header shows the identification with $\qi, \qj, \qk$ and $i$. Table generated using http://bivector.net.\label{tab:cayley}}}
        \centering
        \begin{tabular}{c|c|ccccccc|}
            \multicolumn{2}{c}{}& $i \qk$ & $i \qj$ & $i \qi$ & $-\qi$ & $\qj$ & $-\qk$ & \multicolumn{1}{c}{$i$} \\
            \cline{2-9}\cline{2-9}
            & \cellcolor[HTML]{88FF88}$1$ & \cellcolor[HTML]{CCCCFF}$e_{1}$ & \cellcolor[HTML]{CCCCFF}$e_{2}$ & \cellcolor[HTML]{CCCCFF}$e_{3}$ & \cellcolor[HTML]{FFCCCC}$e_{12}$ & \cellcolor[HTML]{FFCCCC}$e_{13}$ & \cellcolor[HTML]{FFCCCC}$e_{23}$ & \cellcolor[HTML]{FFCCFF}$e_{123}$\\
            \cline{2-9}
$i \qk$ & \cellcolor[HTML]{CCCCFF}$e_{1}$ & \cellcolor[HTML]{88FF88}$1$ & \cellcolor[HTML]{FFCCCC}$e_{12}$ & \cellcolor[HTML]{FFCCCC}$e_{13}$ & \cellcolor[HTML]{CCCCFF}$e_{2}$ & \cellcolor[HTML]{CCCCFF}$e_{3}$ & \cellcolor[HTML]{FFCCFF}$e_{123}$ & \cellcolor[HTML]{FFCCCC}$e_{23}$\\
$i \qj$ & \cellcolor[HTML]{CCCCFF}$e_{2}$ & \cellcolor[HTML]{FFAAAA}$-e_{12}$ & \cellcolor[HTML]{88FF88}$1$ & \cellcolor[HTML]{FFCCCC}$e_{23}$ & \cellcolor[HTML]{AAAAFF}$-e_{1}$ & \cellcolor[HTML]{FFAAFF}$-e_{123}$ & \cellcolor[HTML]{CCCCFF}$e_{3}$ & \cellcolor[HTML]{FFAAAA}$-e_{13}$\\
$i \qi$ & \cellcolor[HTML]{CCCCFF}$e_{3}$ & \cellcolor[HTML]{FFAAAA}$-e_{13}$ & \cellcolor[HTML]{FFAAAA}$-e_{23}$ & \cellcolor[HTML]{88FF88}$1$ & \cellcolor[HTML]{FFCCFF}$e_{123}$ & \cellcolor[HTML]{AAAAFF}$-e_{1}$ & \cellcolor[HTML]{AAAAFF}$-e_{2}$ & \cellcolor[HTML]{FFCCCC}$e_{12}$\\
$- \qi$ & \cellcolor[HTML]{FFCCCC}$e_{12}$ & \cellcolor[HTML]{AAAAFF}$-e_{2}$ & \cellcolor[HTML]{CCCCFF}$e_{1}$ & \cellcolor[HTML]{FFCCFF}$e_{123}$ & \cellcolor[HTML]{FF8888}$-1$ & \cellcolor[HTML]{FFAAAA}$-e_{23}$ & \cellcolor[HTML]{FFCCCC}$e_{13}$ & \cellcolor[HTML]{AAAAFF}$-e_{3}$\\
$\qj$ & \cellcolor[HTML]{FFCCCC}$e_{13}$ & \cellcolor[HTML]{AAAAFF}$-e_{3}$ & \cellcolor[HTML]{FFAAFF}$-e_{123}$ & \cellcolor[HTML]{CCCCFF}$e_{1}$ & \cellcolor[HTML]{FFCCCC}$e_{23}$ & \cellcolor[HTML]{FF8888}$-1$ & \cellcolor[HTML]{FFAAAA}$-e_{12}$ & \cellcolor[HTML]{CCCCFF}$e_{2}$\\
$- \qk$ & \cellcolor[HTML]{FFCCCC}$e_{23}$ & \cellcolor[HTML]{FFCCFF}$e_{123}$ & \cellcolor[HTML]{AAAAFF}$-e_{3}$ & \cellcolor[HTML]{CCCCFF}$e_{2}$ & \cellcolor[HTML]{FFAAAA}$-e_{13}$ & \cellcolor[HTML]{FFCCCC}$e_{12}$ & \cellcolor[HTML]{FF8888}$-1$ & \cellcolor[HTML]{AAAAFF}$-e_{1}$\\
$i $ & \cellcolor[HTML]{FFCCFF}$e_{123}$ & \cellcolor[HTML]{FFCCCC}$e_{23}$ & \cellcolor[HTML]{FFAAAA}$-e_{13}$ & \cellcolor[HTML]{FFCCCC}$e_{12}$ & \cellcolor[HTML]{AAAAFF}$-e_{3}$ & \cellcolor[HTML]{CCCCFF}$e_{2}$ & \cellcolor[HTML]{AAAAFF}$-e_{1}$ & \cellcolor[HTML]{FF8888}$-1$\\
        \cline{2-9}
        \end{tabular}
    \end{table}
    
There are now several choices for a quaternionic generalization of complex numbers that we could use. For readibility, here we choose $\qj$ as the quaternionic component. Thus, a bicomplex number $w$ can be written with a quaternionic component as
    \begin{equation}
        w = (a + b i) + (c + d i) \qj.
    \end{equation}
The number of basis vectors in an element defines its grade, e.g. $\qj = e_{13}$ is of grade~2 and $i = e_{123}$ is of grade~3.
As a generalization of the usual $\Re$ and $\Im$ operators of complex analysis it is prudent to introduce a grade selection operator to distinguish the various grades.
The grade selection operator is denoted $\expval{w}_r$, and selects the grade $r$ part of $w$. Using grade selection we can select the complex part of $w$ by defining
    \[ \expval{w}_\mathbb{C} := \expval{w}_0 + \expval{w}_3 = a + bi. \]
With this notation the QSD analog of \eqref{eq:csd_derivative} is
    \begin{equation}
        f'(z + h \qj) = \frac{1}{h} \expval{\frac{f(z + h\qj)}{\qj}}_\mathbb{C} + \order{h^2}.
    \end{equation}

Because transcendental functions like exponentials and logarithms are well defined for complex quaternions, QSD can be used to evaluate their derivatives as well. 
However, depending on the library or representation used, the promised machine level precision might not always be available out of the box. 
For this reason \cref{app:elementary_functions} details closed form expressions for these functions.

\subsection{QSD is agnostic to the quaternionic direction} \label{sec:agnostic}
In the previous section $\qj$ was chosen as the quaternionic direction to aid readability. However, in principle there is no preference for a particular quaternionic direction. To demonstrate this, let $\qqq$ be a linear combination of the unit quaternions:
    \begin{equation}
        \qqq = c_\qi \qi + c_\qj \qj + c_\qk \qk,
    \end{equation}
normalized such that $\qqq^2 = -1$ and therefore $c_\qi^2+c_\qj^2+c_\qk^2 = 1$.
Since $i = e_{123}$ commutes with all quaternions, it also commutes with $\qqq$, and thus $\qqq$ can be used to perform QSD:
    \begin{align}\label{eq:taylor_quaternion}
        f(z + h \qqq) &= f(z) + h \qqq f'(z) - \frac{h^2}{2} f''(z) + \order{h^3}.
    \end{align}
Since $\qqq^2 = -1$, $f'(z)$ is obtained as
    \begin{equation*}
        f'(z) = \frac{1}{h} \expval{f(z + h \qqq) \qqq^{-1}}_\mathbb{C} + \order{h^2}.
    \end{equation*}
Expressing $c_\qi, c_\qj$ and $c_\qk$ in spherical coordinates,
    \begin{align}
        c_\qi &= \sin \theta \cos \phi, \qquad c_\qj = \sin \theta \sin \phi, \qquad c_\qk = \cos \theta,
        \label{eq:quat_coeff}
    \end{align}
with $\theta \in [0, \pi], \phi \in [0, 2 \pi )$, the claim of independence of quaternionic directions can be investigated as a function of $(\theta, \phi)$. The results are presented further on in \S\ref{sec:results_agnostic}, and have interesting consequences for the $2 \times 2$ matrix representation.

\subsection{Matrix representations of \texorpdfstring{$\spacealg$}{3D space}}\label{sec:matrix_repr}

In order to build a matrix representation of $\spacealg$ we use that the Pauli matrices~$\sigma_i$ form a matrix representation of the basis vectors of 3D space \cite{STA}, and therefore $\sigma_i \Leftrightarrow e_i$, where $\Leftrightarrow$ denotes isomorphism. It follows that the pseudoscalar $e_{123}$ is identified with 
\[ e_{123} \Leftrightarrow \sigma_1 \sigma_2 \sigma_3 = i \mathbbm{1}_2, \]
where $i$ is the ordinary complex unit and $\mathbbm{1}_2$ the $2 \times 2$ identity matrix. Thus,
    \begin{align}
        \qi \Leftrightarrow \qI{2} := -i \sigma_1 &= \mqty(0 & -i \\ -i & 0), \quad \notag \\
        \qj \Leftrightarrow \qJ{2} := -i \sigma_2 &= \mqty(0 & -1 \\ 1 & 0), \quad \notag \\
        \qk \Leftrightarrow \qK{2} := -i \sigma_3 &=\mqty(-i & 0 \\ 0 & i). \label{eq:defining_rep_su2}
    \end{align}
This representation is compact, but it does suffer from a drawback. In the geometric algebra $\spacealg$ all basis elements are orthogonal under the inner product:
    \[x \cdot y = \begin{cases}0 & x \neq y \\ \pm 1 & x = y \end{cases},\]
as is evident from the Cayley \cref{tab:cayley}. The matrix equivalent of this inner product is
    \begin{equation}
        x \cdot y \to \frac{1}{n} \trace{X_n Y_n},
        \label{eq:inner_product_mat}
    \end{equation}
where $X_n$ is an $n \times n$ matrix representation of $x$. When considering $A = X_2 Y_2$ with $X_2 = \qK{2}$ and $Y_2 = i \mathbbm{1}_2$, we find
    \[ A_{11} + A_{22} = \frac{1}{2} \Tr\pqty{i \qK{2}} = \frac{1}{2} \Tr\mqty(1 & 0 \\ 0 & -1), \]
which is zero analytically, but numerically the exact cancellation of $A_{11}$ and $A_{22}$ is not guaranteed. As will be demonstrated in \S\ref{sec:results_agnostic} and \S\ref{sec:relation_finite_diff}, in practice QSD implementations involving $\qK{2}$ lead to catastrophic cancellations. When $\qI{2}$ or $\qJ{2}$ is used this is not an issue as the zero trace does not depend on the exact cancellation of the elements. 

In order to restore the orthogonality of the basis elements of $\spacealg$ a $4 \times 4$ real representation of the quaternions can be used instead:
    \begin{align}
        \qi \Leftrightarrow \qI{4} &:= \mqty(0 & 1 & 0 & 0 \\ -1 & 0 & 0 & 0 \\ 0 & 0 & 0 & 1 \\ 0 & 0 & -1 & 0), \quad \notag \\
        \qj \Leftrightarrow \qJ{4} &:= \mqty(0 & 0 & -1 & 0 \\ 0 & 0 & 0 & 1 \\ 1 & 0 & 0 & 0 \\ 0 & -1 & 0 & 0), \quad \notag \\
        \qk \Leftrightarrow \qK{4} &:= \mqty(0 & 0 & 0 & 1 \\ 0 & 0 & 1 & 0 \\ 0 & -1 & 0 & 0 \\ -1 & 0 & 0 & 0). \label{eq:matrix_rep_4d}
    \end{align}
This representation is obtained from the defining representation \eqref{eq:defining_rep_su2} by realizing that the quaternions \eqref{eq:quaternions} are isomorphic to the Lie group SU(2). This follows from their commutation relations
\begin{align}
    \comm{- i \sigma_i}{- i \sigma_j} &= - 2 \sigma_{ij} = - 2 \sigma_{ijk} \sigma_k \label{eq:lie_algebra_su2}
\end{align}
which are isomorphic to those of the Lie algebra $\mathfrak{su}(2)$, since $\sigma_{ijk} = \pm i$ where the sign depends on the number of permutations needed to bring $\sigma_{ijk}$ to $i := \sigma_{123}$.\footnote{The geometric product automatically takes care of any sign swaps, hence there is no need for the Levi-Civita tensor.}
One can easily verify that the matrices defined in Eq.~\eqref{eq:matrix_rep_4d} obey the $\mathfrak{su}(2)$ Lie algebra \eqref{eq:lie_algebra_su2} as well, and as such form another representation.

Specifically, the matrices \eqref{eq:matrix_rep_4d} correspond to the $(1/2,0)\oplus (0,1/2)$-represen\-tation of the restricted Lorentz group. The representations of the underlying Lie algebra can be obtained from those of $\mathfrak{su}(2)\oplus \mathfrak{su}(2)$, see \cite{Weinberg:1995mt} for an informal physics-inspired discussion or \cite{Bargmann} for a rigorous treatment. 

In this $4 \times 4$ representation $\qK{4}$ is anti-diagonal, and therefore
    \[ \frac{1}{4} \Tr \pqty{i \qK{4}} = 0 \]
exactly, just like for $\qI{4}$ and $\qJ{4}$. This representation truly respects the orthogonality of the basis elements of $\spacealg$.

The matrix representation of a bicomplex number $w$ is now written as
    \begin{equation}
        w = (a + b i) \mathbbm{1}_n + (c + d i) \mathbf{J}_n,
    \end{equation}
where $\mathbbm{1}_n$ is the $n$-dimensional identity matrix and
    \begin{align}
        \expval{w}_\mathbb{C} := a + b i = \frac{1}{n} \tr\bqty{w}, \quad \expval{w \qJ{n}^{-1}}_\mathbb{C} := c + d i = \frac{1}{n} \tr\bqty{w \mathbf{J}_n^{-1}}.
    \end{align}
The matrix representation of the derivative of a holomorphic function $f(z)$ is given by
    \begin{align}
        f'(z) &= \frac{1}{n h}\tr[f(z \mathbbm{1}_n + h \mathbf{J}_n) \mathbf{J}_n^{-1}] + \order{h^2}.
        \label{eq:mat_f}
    \end{align}
Common mathematical functions should be replaced by their matrix counterpart. 
For example, the operation of division should be replaced by matrix inversion 
    \begin{equation}
        1 / w \rightarrow w^{-1},
    \end{equation}
where due to commutativity there is no distinction between the left and right inverse, 
and the matrix exponent is defined by the series
    \begin{equation}
        e^w = \sum_{n=0}^\infty \frac{w^n}{n!},
    \end{equation}
which can then be used to define trigonometric functions
    \begin{align}
        \sin w &= \frac{1}{2i} \pqty{e^{wi}-e^{-wi}}, \qquad \cos w = \frac{1}{2} \pqty{e^{wi}+e^{-wi}}.
    \end{align}
Such matrix functions are commonly available in programming languages.

However, not all matrix functions maintain the desired precision, as we shall see in \cref{sec:results_first}. For this reason \cref{app:elementary_functions} describes how various elementary functions like logarithms and square roots could be implemented to maintain a high degree of accuracy.

\subsection{Existence of the Taylor series} \label{sec:series}

Under which conditions does the Taylor series of \cref{eq:qsd_def} exist? A holomorphic function $f(z)$ is complex differentiable at every point $z_0$ in its domain $U$.
Therefore it can be written as a Taylor series around $z_0$:
    \begin{equation}
        f(z) = \sum_{k=0}^\infty \frac{1}{k!} \dv[k]{f(z_0)}{z} \pqty{z - z_0}^k.
    \end{equation}
This series converges when $\abs{z - z_0} < r$, with $r$ the radius of convergence.

In order to find the convergence conditions of $f(z+h \qqq)$, consider the $2 \times 2$ matrix representation of the argument:
    \begin{equation}
        A = z \mathbbm{1}_2 + h \qQ{2} .
    \end{equation}
The eigenvalues of $A$ are $\lambda_\pm = z \pm i h$.
Applying Theorem 4.7 of \cite{Higham:2008:FM} to $f(A)$, the Taylor expansion of $f(A)$ around $z_0 \mathbbm{1}_2$
exists if and only if each of the eigenvalues of $A$ satisfies one of the following conditions:
\begin{enumerate}
    \item $\abs{\lambda_\pm - z_0} = \abs{(z - z_0) \pm i h} < r$.
    \item $\abs{\lambda_\pm - z_0} = \abs{(z - z_0) \pm i h} = r$, and the series is convergent for $f(\lambda_\pm)$.
\end{enumerate}
Therefore, since $h$ is arbitrarily small,
$h$ can always be chosen such that $\abs{\lambda_\pm - z_0} \leq r$.

An alternative approach is to consider the Cauchy-Riemann equations for functions over biquaternions. 
Defining complex numbers as $z_n = a_n + i b_n$, a biquaternion can be written as $w = z_1 + \qqq z_2$. In order for the Taylor series \cref{eq:qsd_def} to be valid, we need to know under which conditions $f'(w_0) = \pdv*{f(w_0)}{w}$ exists. Let $h \in \spacealg \cong \mathbb{C} \otimes \mathbb{H}$ be a complex quaternion. Then the formal definition of $f'(w_0)$ is
    \begin{equation}
        f'(w_0) := \lim_{h \to 0} \frac{f(w_0+h) - f(w)}{h},
    \end{equation}
if this limit exists. For the limit to exist it must be the same regardless of the direction along which it is approached. This results in the Cauchy-Riemann equations
    \begin{equation}
        \pdv{f(w_0)}{a_1} = \frac{1}{i}\pdv{f(w_0)}{b_1} = \frac{1}{\qqq}\pdv{f(w_0)}{a_2} = \frac{1}{i\qqq}\pdv{f(w_0)}{b_2}. \label{eq:cr_bicomplex}
    \end{equation}
The  Wirtinger derivatives with respect to these variables are
    \begin{alignat}{3}
        \pdv{}{z_n} &= \frac{1}{2} \pqty{\pdv{}{a_n} - i \pdv{}{b_n}} \qquad &\pdv{}{\tilde{z}_n} &= \frac{1}{2} \pqty{\pdv{}{a_n} + i \pdv{}{b_n}} \\
        \pdv{}{w} &= \frac{1}{2} \pqty{\pdv{}{z_1} - \qqq \pdv{}{z_2}} \qquad &\pdv{}{\tilde{w}} &= \frac{1}{2} \pqty{\pdv{}{\tilde{z}_1} + \qqq \pdv{}{\tilde{z}_2}}.
    \end{alignat}
Using these derivatives the Cauchy Riemann equations \eqref{eq:cr_bicomplex} can be restated as independence of $f(w)$ from $\tilde{w}$, $\tilde{z}_2$ and $\tilde{z}_1$:
    \begin{equation}
        \pdv{f(w_0)}{\tilde{w}} = \pdv{f(w_0)}{\tilde{z}_1} = \pdv{f(w_0)}{\tilde{z}_2} = 0.
    \end{equation}
Therefore the Taylor expansion of \cref{eq:qsd_def} is valid when the Cauchy-Riemann equations \cref{eq:cr_bicomplex} are met.

\subsection{Relation to existing work}\label{sec:existingwork}

The use of complex analysis for the evaluation of derivatives of holomorphic functions dates back at least to Lyness~\cite{lyness} and Lyness and Moler~\cite{Moler}, based predominantly on Cauchy integrals in the complex plane. More recently, it was shown by Bornemann that optimization of the contour of Cauchy integrals may yield machine precision accuracy even for very high order derivatives \cite{bornemann}. Yet, apart from achieving similar high accuracy, the evaluation of Cauchy integrals with quadrature rules differs substantially from CSD and QSD, which require only a single function evaluation.

The Complex Step Derivative was introduced by Squire and Trapp~\cite{squire}, who with a number of examples also emphasized the possibility of achieving machine precision accuracy. The technique continues to be used (see e.g.~\cite{balzani,martins}) and extended: to higher order derivatives~\cite{lai}, matrix functions~\cite{higham}, Lie groups \cite{cossette} and bicomplex or multicomplex numbers~\cite{casado,Lantoine}.

In applications, CSD is often seen as an alternative to Automatic Differentiation (AD), with which it shares some similarities~\cite{martins2}, especially when based on dual numbers. Yet, complex numbers are available in all popular programming languages and this often makes CSD comparatively easy to implement.

Turner already generalized CSD using quaternions ~\cite{turner}. However, the three unit quaternions $\qi$, $\qj$ and $\qk$ are used separately for each argument of a three-dimensional function. Unlike the setting of holomorphic functions of a single complex argument, this does not require that they commute. Commuting roots of $-1$ are, as we have established, an essential property for the derivation of functions of a single complex variable using quaternions.

Among the above-mentioned extensions of CSD, the multicomplex methods of \cite{casado,Lantoine} are most closely related to QSD. Indeed, much like quaternions, the relatively recent theory of multicomplex analysis starts from multiple roots of $-1$ which, unlike quaternions, all commute by construction \cite{price}. These roots offer additional degrees of freedom that may be used to evaluate higher order, partial derivatives of real analytic functions. This was shown and used by Lantoine~\cite{Lantoine}, and also provides the basis for a recent Matlab toolbox~\cite{casado}. 

Having arrived independently at a similar construction, we will focus on the differences between QSD and multicomplex methods later on. We will demonstrate that QSD can be implemented using complex quaternions, geometric algebra, or a complex matrix representation thereof, and so can also be implemented relatively easily when one of these facilities is available.

\section{Results}\label{sec:results}

Four possible implementations of QSD were tested for their accuracy in computing first derivatives of holomorphic functions. To provide context, the results are compared against central differences, and the state of the art Matlab multicomplex number class of Casado and Hewson \cite{casado}.
The four QSD implementations are a geometric algebra implementation using the \verb|clifford| Python package \cite{python_clifford}, complex quaternions using \verb|SymPy| \cite{SymPy}, and the $2 \times 2$ and $4 \times 4$ matrix representations of \S\ref{sec:matrix_repr} using \verb|SciPy| \cite{2020SciPy}.

The results are presented as follows.
Firstly, in \S\ref{sec:results_first} the calculation of the first derivative of a literature standard holomorphic function is performed. 
Secondly, \S\ref{sec:results_agnostic} compares the angular dependence of the quaternionic step for the GA and matrix implementations. Thirdly, \S\ref{sec:relation_finite_diff} describes a relation to finite differences. 
Finally, \S\ref{sec:results_log} considers the differentiation of the principal logarithm $\Ln z$ since it is such a fundamental building block of complex analysis.

In all of the following sections the relative error in the $n$-th derivative $f^{(n)}$ is measured as
    \begin{equation}
        \epsilon = \frac{\abs{f^{(n)} - f^{(n)}_\text{exact}}}{\abs{f^{(n)}_\text{exact}}}.
    \end{equation}
The $n$-th order central difference is computed as
    \begin{equation}\label{eq:central_difference}
        \delta^n_h f(x) = \sum_{i=0}^n (-1)^{i} \binom{n}{i} f\pqty{x + \pqty{\frac{n}{2}-i}h },
    \end{equation}
where $h$ is the step size.
\begin{figure}
    \centering
    \begin{subfigure}[t]{.47\textwidth}
        \centering
        \includegraphics[width=\textwidth]{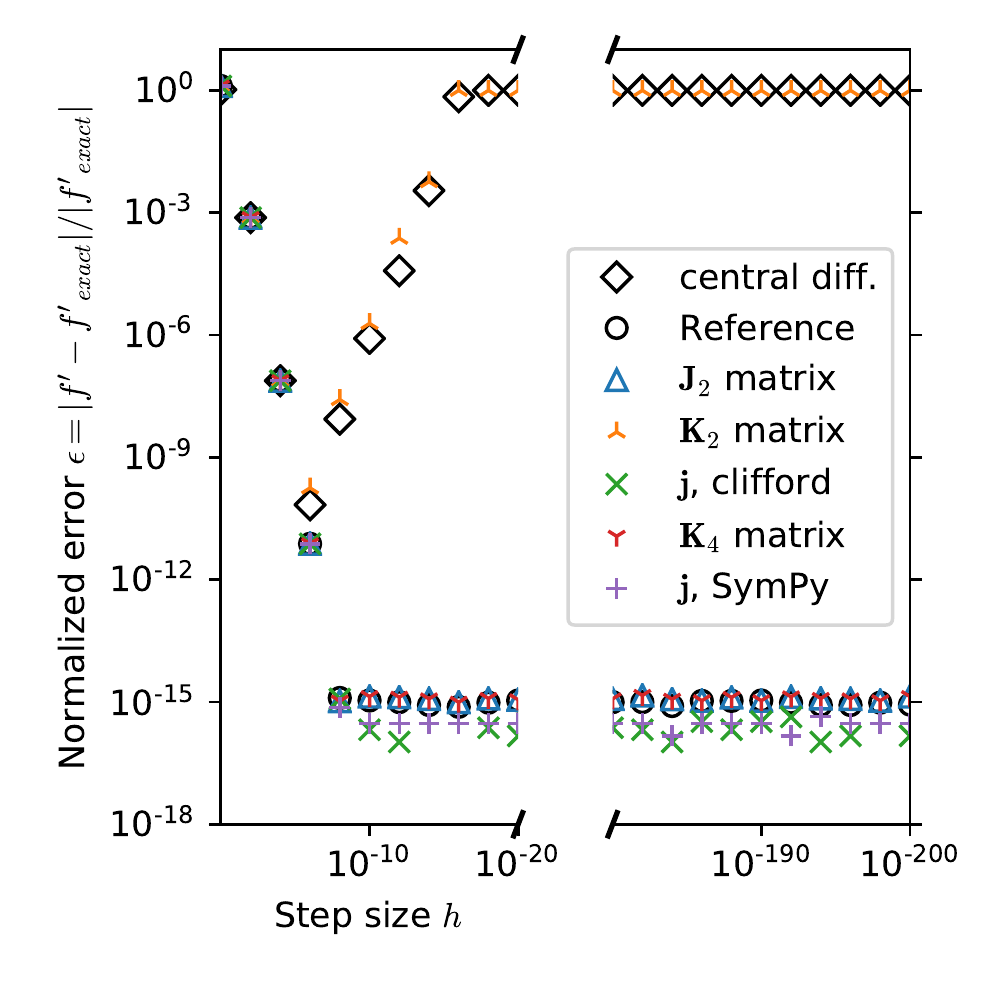}
        \caption{Precision in the calculation of $\dv{g}{z}$, evaluated at $z=\frac{\pi}{4} + \frac{\pi}{3} i$.}
        \label{fig:results_1st}
    \end{subfigure} ~ ~
    \begin{subfigure}[t]{.47\textwidth}
        \centering
        \includegraphics[width=\textwidth]{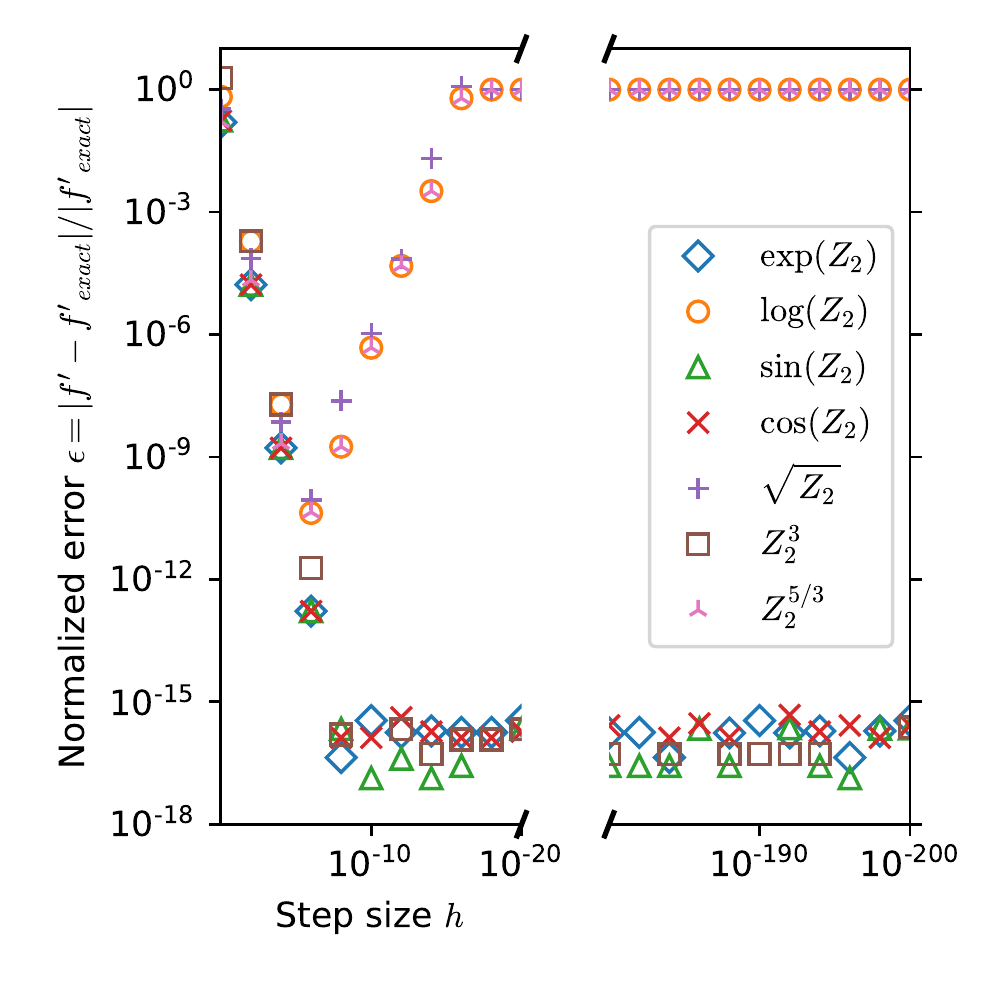}
        \caption{Derivatives of $2 \times 2$ matrix representations of various transcendental and algebraic functions, evaluated at $Z_2=(\frac{\pi}{4} + \frac{\pi}{3} i) \mathbbm{1}_2$.}
        \label{fig:results_matrix_rep}
    \end{subfigure}
    \caption{Step size $h$ vs. the relative error $\epsilon$. \Cref{fig:results_1st}: All QSD based methods leveraging (trace) orthogonality are accurate up to numerical precision and are convergent for $g(z)$. \Cref{fig:results_matrix_rep}: for matrix representations the precision of QSD becomes limited to that of the underlying matrix method.}
\end{figure}
    
\subsection{First derivative of a holomorphic function} \label{sec:results_first}
A literature standard trial function in CSD papers is
    \begin{equation}
        g(z) = \frac{e^z}{\cos^3(z) + \sin^3(z)},
        \label{eq:trial_func}
    \end{equation}
dating back to the original 1967 paper by Lyness and Moler \cite{Moler}.

Although \cref{eq:trial_func} is a holomorphic function, CSD only allows the computation of its derivative for real values, as the complex axis is reserved for holding the first derivative. But with QSD we are now able to accurately compute the derivative anywhere on the complex plane. To demonstrate this we shall differentiate $g(z)$ at
\[z = \frac{\pi}{4} + \frac{\pi}{3} i. \]

\Cref{fig:results_1st} shows the result of deriving $g$ at $z = \frac{\pi}{4} + \frac{\pi}{3} i$ with the methods listed in \S\ref{sec:results}.
The various implementations of QSD perform markedly better than the central difference method, as finite differences does not achieve the same accuracy as QSD, and does not converge as $h$ gets smaller. The QSD matrix implementation using $\qK{2} = - i \sigma_3$ performs similar to the central difference method as expected, but all three implementations of QSD which truly leverage the orthogonality of $\mathcal{G}(\mathbb{R}_3)$ result in machine precision determination of the derivative and are convergent.

When using matrix representations, the available precision in QSD becomes limited by the precision of the matrix methods used. This is shown in \cref{fig:results_matrix_rep}, which shows the $h$ vs. $\epsilon$ curve for the $2 \times 2$ matrix representations of various transcendental and algebraic functions evaluated at $z$. The \verb|SciPy| implementations of \verb|expm|, \verb|sinm| and \verb|cosm| use Padé approximation to great success, whereas the failing \verb|logm| and \verb|sqrtm| rely on Schur factorization.
This indicates that in practice, QSD does not work as desired for all matrix functions.
However, we know from the theory of complex quaternions that all of these functions are well defined. \Cref{app:elementary_functions} lists the closed form expressions for logarithms, square roots, and inverse trigonometric functions, and should be used to implement these functions to guarantee machine precision derivatives.

\subsection{Quaternionic directional dependence} \label{sec:results_agnostic}

As discussed in \S\ref{sec:agnostic}, the method is expected to function equally well for all quaternionic directions. However, in \S\ref{sec:matrix_repr} it was already discussed that QSD implemented using $\qK{2}$ can break the required orthogonality, which  was also numerically found in section \S\ref{sec:results_first}. To investigate the angular dependency of this loss of orthogonality in more detail, \cref{fig:results_angular} displays the relative error $\epsilon$ for a scan of possible quaternionic directions
    \begin{equation}
        \qqq = c_\qi \qi + c_\qj \qj + c_\qk \qk,
    \end{equation}
where $c_\qi$, $c_\qj$ and $c_\qk$ were previously defined in \cref{eq:quat_coeff}, for $h=10^{-20}$. This procedure is explained in more detail in \S\ref{sec:agnostic}.
    
The $4 \times 4$ matrix implementation shown in \cref{fig:results_angular_ga} reaches $\epsilon < 10^{-15}$ everywhere, clearly demonstrating that the quaternionic step can be taken in any direction as anticipated in \S\ref{sec:agnostic}. However, the $2\times 2$ matrix representation shown in \cref{fig:results_angular_mat} fails everywhere except when $\theta = \pi/2$, corresponding to $c_\qk = 0$ and therefore 
    \begin{equation}
        \qqq = c_\qi \qi + c_\qj \qj = \cos\phi \qi + \sin\phi \qj.
    \end{equation}
When using the $2 \times 2$ matrix representation $\qK{2}$ should therefore be avoided, but any combination of $\qI{2}$ and $\qJ{2}$ can be used.
Of course, using a $4 \times 4$ matrix representation instead of a $2 \times 2$ representation comes with a higher computational cost.

\begin{figure}
    \centering
    \begin{subfigure}{.47\textwidth}
        \centering
        \includegraphics[width=\textwidth]{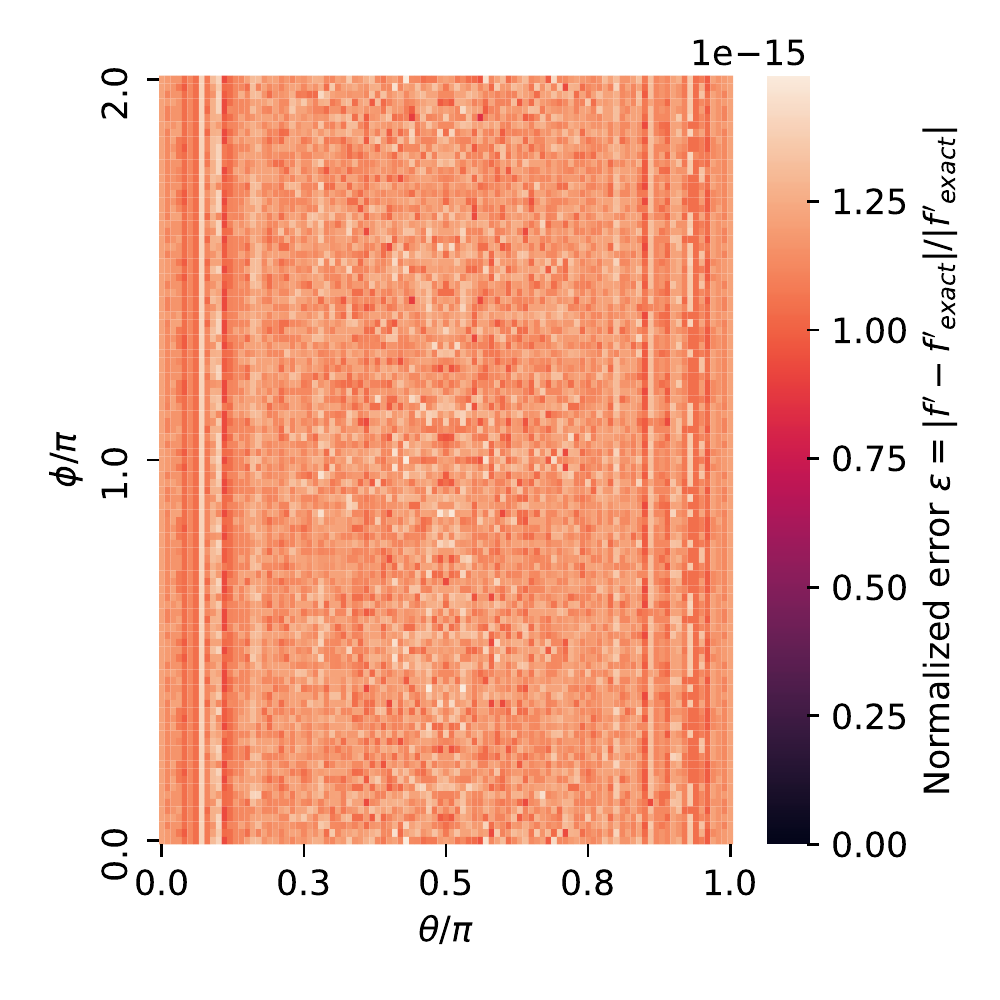}
        \caption{Angular dependence of $\dv{g}{z}$ in the $4 \times 4$ matrix implementation, evaluated at $z=\frac{\pi}{4} + \frac{\pi}{3} i$.}
        \label{fig:results_angular_ga}
    \end{subfigure} ~ ~
    \begin{subfigure}{.47\textwidth}
        \centering
        \includegraphics[width=\textwidth]{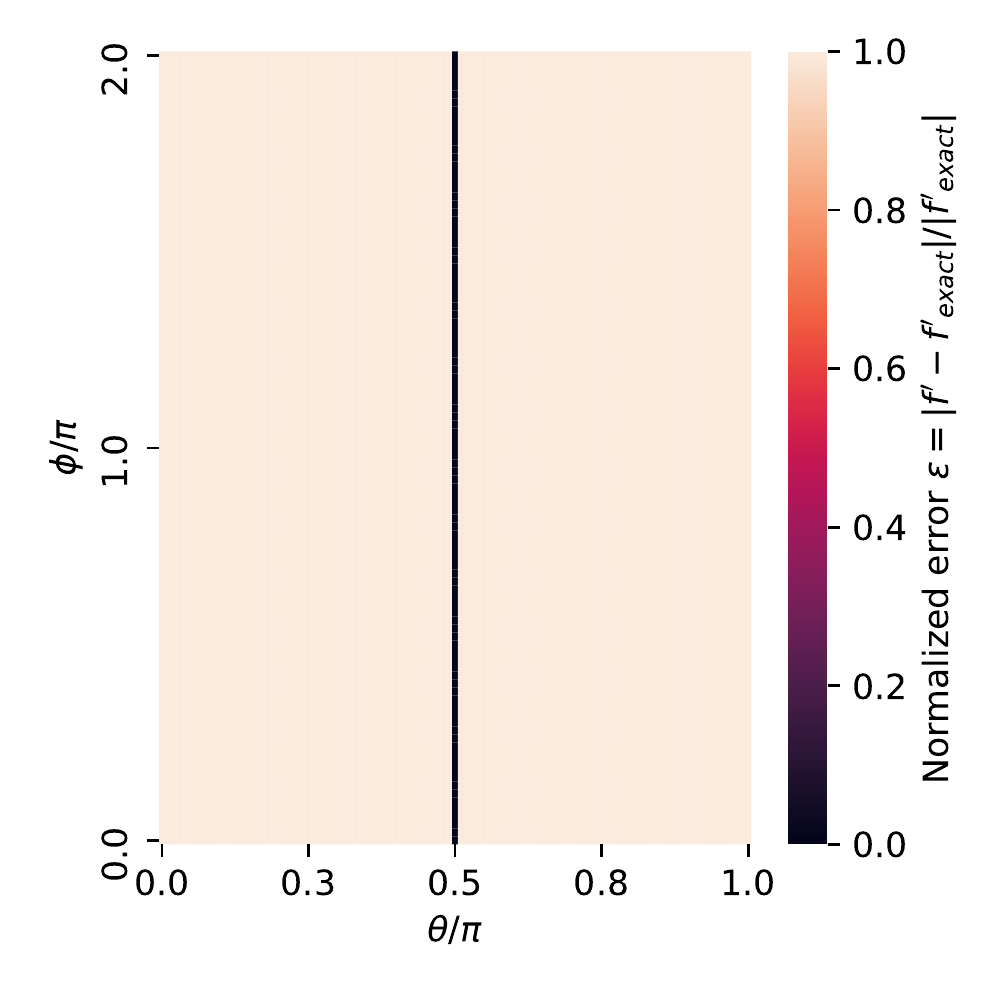}
        \caption{Angular dependence of $\dv{g}{z}$ in the $2 \times 2$ matrix implementation, evaluated at $z=\frac{\pi}{4} + \frac{\pi}{3} i$.}
        \label{fig:results_angular_mat}
    \end{subfigure}
    \caption{Angular dependence of the relative error $\epsilon$. The $4 \times 4$ matrix  implementation in \cref{fig:results_angular_ga} shows no explicit angular dependence, whereas the $2 \times 2$ matrix representation in \cref{fig:results_angular_mat} has a clear preference for $\theta = \frac{\pi}{2}$, which avoids $\qk$. Results comparable to \cref{fig:results_angular_ga} are obtained for the GA and complex quaternion implementations, and hence those results are not shown.}
    \label{fig:results_angular}
\end{figure}

\subsection{Relation between QSD and finite differences}\label{sec:relation_finite_diff}

The results in the previous sections \S\ref{sec:results_first}--\ref{sec:results_agnostic} have shown that the initial convergence of the QSD methods is comparable to that of the central difference formula \eqref{eq:central_difference}. This is not a coincidence, as the following lemma shows.

\begin{lemma}
With $\qQ{2}$ defined as in \S\ref{sec:agnostic}, \eqref{eq:mat_f} satisfies
\begin{equation}\label{eq:equivalence}
\frac{1}{2h}\tr[f(z \mathbbm{1}_2 + h \qQ{2}) \qQ{2}^{-1}] = \frac{f(z + h i) - f(z-h i)}{2h}.
\end{equation}
\end{lemma}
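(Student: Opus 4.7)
The plan is to diagonalize $\qQ{2}$ and then read both sides of \eqref{eq:equivalence} off the diagonal form. Because $\qQ{2}^2 = -\mathbbm{1}_2$, the characteristic polynomial of $\qQ{2}$ is $\lambda^2 + 1$, so $\qQ{2}$ has the two simple scalar eigenvalues $\pm i$ and is diagonalizable. Writing $\qQ{2} = S \operatorname{diag}(i,-i) S^{-1}$ and using that $\mathbbm{1}_2$ is invariant under similarity, I would instantly diagonalize the argument of $f$ as
\[
z\mathbbm{1}_2 + h \qQ{2} = S \operatorname{diag}(z + i h,\ z - i h) S^{-1},
\]
so the eigenvalues of this matrix are exactly the evaluation points $z \pm i h$ appearing on the right-hand side of \eqref{eq:equivalence}. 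This is the one observation that makes the lemma essentially automatic.

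Next I would apply $f$ entry by entry on the diagonal. For $h$ small enough the Taylor series of $f$ around $z$ converges at both eigenvalues by \S\ref{sec:series}, so expanding that series termwise in powers of $z\mathbbm{1}_2 + h\qQ{2}$ and conjugating through by $S$ is legitimate and yields
\[
f(z\mathbbm{1}_2 + h \qQ{2}) = S \operatorname{diag}(f(z + i h),\ f(z - i h)) S^{-1}.
\]
Because the matrix is diagonalizable, this step avoids any appeal to general matrix-function machinery; it is just termwise evaluation of a convergent power series on a diagonal matrix.

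Finally I would right-multiply by $\qQ{2}^{-1} = S \operatorname{diag}(-i, i) S^{-1}$ (equivalently, $\qQ{2}^{-1} = -\qQ{2}$, by $\qQ{2}^2 = -\mathbbm{1}_2$), merge the two diagonal factors, and collapse the conjugation by $S$ using the similarity-invariance of the trace. This leaves a weighted combination of $f(z + i h)$ and $f(z - i h)$ with coefficients $\mp i$, which a short rearrangement turns into the central-difference quotient on the right-hand side of \eqref{eq:equivalence}. The only substantive ingredient is the functional-calculus step, and as noted this reduces on a diagonalizable matrix to elementary manipulation of the Taylor series; the main thing to watch during the writeup is the careful bookkeeping of the signs and factors of $i$ attached to each eigenvalue.
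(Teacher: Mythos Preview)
Your proposal is correct and follows essentially the same route as the paper: diagonalize $z\mathbbm{1}_2 + h\qQ{2}$, apply the spectral mapping theorem to push $f$ onto the diagonal entries $z\pm ih$, multiply by $\qQ{2}^{-1}$, and use similarity-invariance of the trace. The only cosmetic difference is that the paper computes the eigenvectors explicitly from the spherical-coordinate parametrization of $\qQ{2}$, whereas you read the eigenvalues off $\qQ{2}^2=-\mathbbm{1}_2$ directly (to make that step airtight, note that $\qQ{2}$ is traceless, which excludes the degenerate case of a repeated eigenvalue $\pm i$).
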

\begin{proof}
 Denote by $A = z \mathbbm{1}_2 + h \qQ{2}$ the matrix-argument to the function $f$ in \eqref{eq:mat_f} and note that:
 \begin{align}\label{eq:matrix_argument}
 A &= z \mathbbm{1}_2 + h \qQ{2} \\
 &= \left[
\begin{array}{cc}
 z-i h \cos (\theta ) & h \sin (\theta ) [-i \cos (\phi )-\sin (\phi )] \\
 h \sin (\theta ) [\sin (\phi )-i \cos (\phi )] & z+h i \cos (\theta ) \\
\end{array}
\right] \notag.
 \end{align}
 The eigenvalues of this matrix are $\lambda = z \mp hi$, with corresponding normalized eigenvectors 
 \[  \frac{1}{\sqrt{2} } \left[\begin{array}{c} -\frac{-\cot (\theta ) \mp \csc (\theta )}{\cos (\phi )+i \sin (\phi )} \\
 1\end{array}\right] \]
 Thus, $A = P \Lambda P^{-1}$ with
 \[
 \Lambda = \left[\begin{array}{cc}
  z - h i & 0 \\ 0 & z + h i
 \end{array}\right], \quad P = \frac{1}{\sqrt{2}}\left(
\begin{array}{cc}
 -\frac{-\cot (\theta )-\csc (\theta )}{\cos (\phi )+i \sin (\phi )} & -\frac{\csc (\theta )-\cot (\theta )}{\cos (\phi )+i \sin (\phi )} \\
 1 & 1 \\
\end{array}
\right).
 \]
 By the spectral mapping theorem, using $\qQ{2}$ again and performing the multiplications and trace operation, the result follows.
\end{proof}

The lemma shows that the application of central difference with a purely imaginary step $ih$ is mathematically equivalent to the trace of a function evaluation with a matrix argument. Yet, this mathematical equivalence falls short of explaining the beneficial convergence of QSD for very small $h$, which is a numerical phenomenon.

Indeed, the right hand side of~\eqref{eq:equivalence} suffers catastrophic cancellation for small $h$, but the left hand side does not iff $\theta = \pi /2$. In this case the matrix argument given by \eqref{eq:matrix_argument} cleanly separates quantities of different magnitude along the diagonal and anti-diagonal. As a result, this matrix can be accurately represented with standard floating point values even when $h = \num{e-200}$. Depending on how the evaluation of $f$ is implemented, the separation of orders may even persist, as for small $h$ any analytic function $f$ approximately acts separately on the diagonal and anti-diagonal -- this follows from the Taylor series \eqref{eq:taylor_quaternion} where the matrix-representation of $\mathbf{Q}$ is anti-diagonal. Conversely, if $\mathbf{Q}$ is not an anti-diagonal matrix, the quantities of different order mix and any presence of a small $h$ is numerically neglected. This is the case when using the matrix representation of $\qK{2}$, which does not work as illustrated in \S\ref{sec:results_agnostic}.

As an explicit example, with $z = 0.5 + 0.2i$, $h = \num{e-100}$, $\qQ{2} = \qJ{2}$ and $f(z)=e^z$, we see that
\[
 A = \left[\begin{array}{cc}
  0.5+0.2i & \num{e-100} \\ \num{-e-100} & 0.5+0.2i
 \end{array}\right]
\]
and
\[
 f(A) \approx \left[\begin{array}{cc}
  1.61586+0.32755i  &    \pqty{1.61586+0.32755 i} \num{e-100} \\
 \pqty{- 1.61586 - 0.32755 i} \num{e-100} &      1.61586+0.32755i
 \end{array}\right].
\]

This is much like the behaviour of the original CSD method. A complex number with an imaginary part that is tiny compared to its real part may still be represented to very high accuracy. If the function evaluation is implemented correctly for such values, the output yields the first derivative of the function to high accuracy. QSD extends this to the complex plane, with -- in the case of implementation via the $2 \times 2$ matrix representation of $\spacealg$  -- the roles of the real and imaginary part of a number replaced by the diagonal and anti-diagonal of a $2 \times 2$ matrix. In both cases, CSD and QSD, the implementation effort is limited to making sure that the single function evaluation is performed accurately.

\subsection{First derivative of the Principal Logarithm} \label{sec:results_log}

\begin{figure}
    \centering
    \begin{subfigure}{.47\textwidth}
        \centering
        \includegraphics[width=\textwidth]{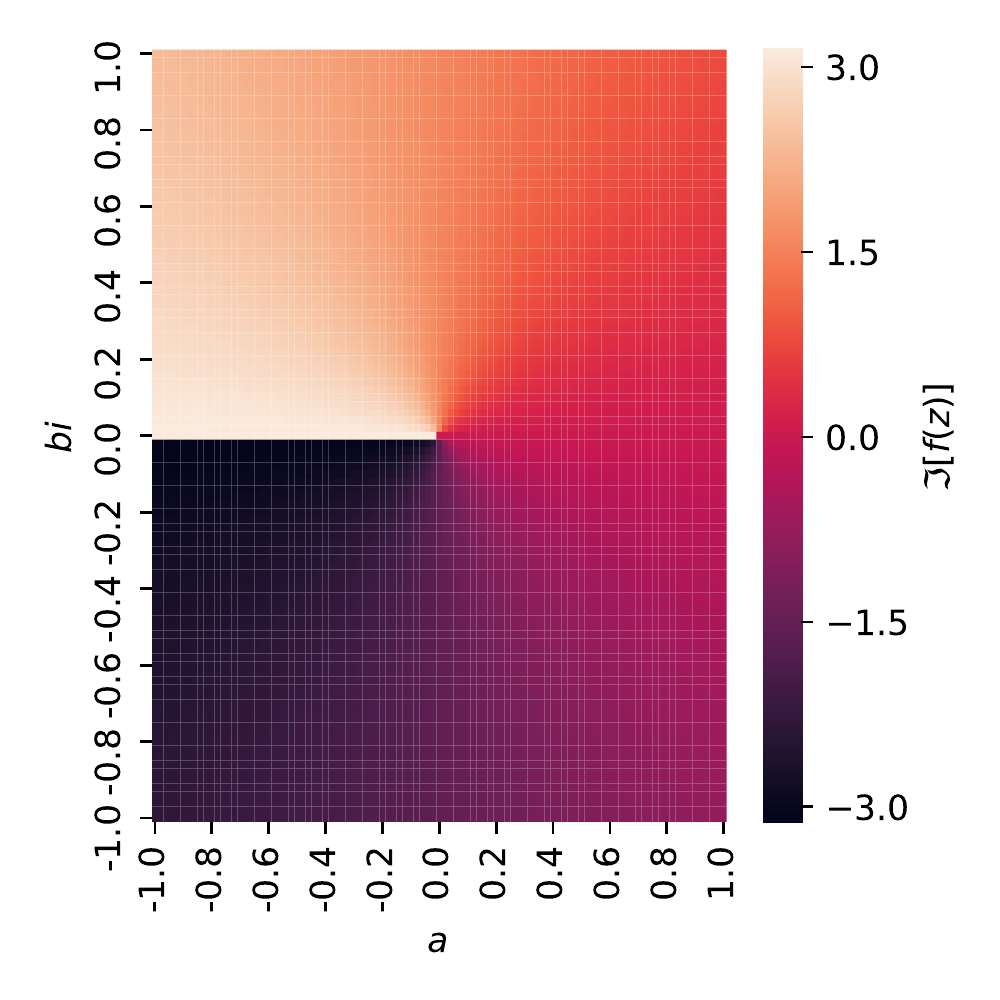}
        \caption{Imaginary part of $\Ln{z}$ on the complex plane, ranging from $-\pi$ to $\pi$. }
        \label{fig:results_im_log}
    \end{subfigure} ~ ~
    \begin{subfigure}{.47\textwidth}
        \centering
        \includegraphics[width=\textwidth]{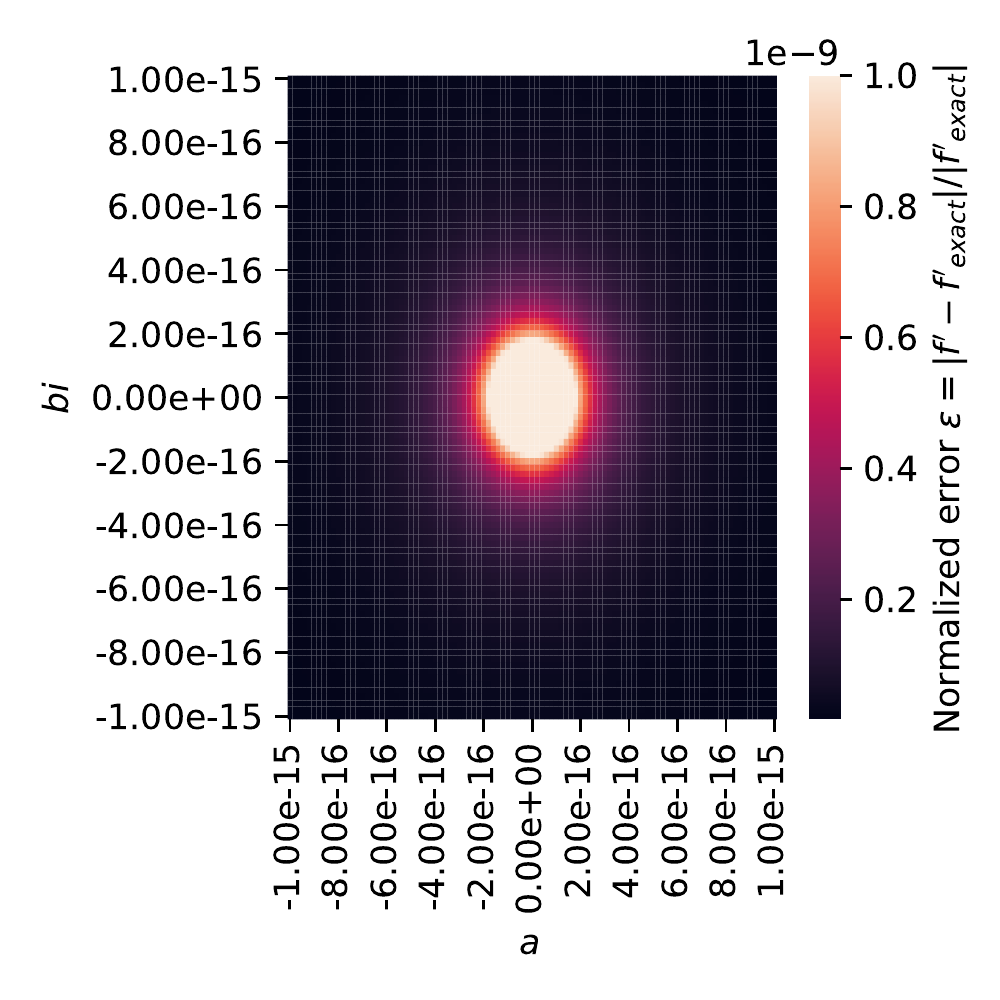}
        \caption{The derivative of $\Ln z$, focused in on the origin $z = 0$. Evaluated with $h=10^{-20}$.}
        \label{fig:results_dlog}
    \end{subfigure}
    \caption{Value of $\Ln z$ and its first derivative as obtained with QSD using $h=10^{-20}$. The singularity at $z=0$ causes numerical problems only in a very small neighborhood around the singularity.}
    \label{fig:log}
\end{figure}

A complex number $z = a + bi = r e^{i \theta}$ has principal logarithm
    \begin{equation}
        f(z) = \Ln{z} = \ln \abs{z} + i \Arg(z) = \ln r + i \theta,
    \end{equation}
where the principal argument
    \begin{equation}
        \Arg{z} = \atantwo(b, a)
    \end{equation}
lies in the open-closed interval $(-\pi, \pi]$. It is defined using the 2-argument arc\-tangent, which unambiguously returns a value in the interval $(-\pi, \pi]$. The extension of the complex logarithm when evaluated with a quaternionic component $h \qqq$ is given in \cref{app:elementary_functions}, and \cref{fig:results_matrix_rep} demonstrated that this closed form has to be used when using a matrix representation.

$\Ln z$ is analytic in the domain $r > 0$, $-\pi < \theta < \pi$, but it has a branch cut when $\theta = \pi$ and a branch point when $r=0$ \cite{ablowitz_fokas_2003}. In the domain of analyticity $\Ln z$ has derivative
    \begin{equation}
        \dv{\Ln z}{z} = \frac{1}{z}.
    \end{equation}
The derivative $1/z$ is a meromorphic function, as it is holomorphic everywhere except at the pole $z=0$.
\Cref{fig:results_im_log} shows the imaginary part of $\Ln z$ for $-1 \leq a \leq 1$ and $-1 \leq b \leq 1$, and shows the discontinuity when $a \leq 0$, $b=0$; which is due to the branch cut. A finite difference based method would fail to compute the differential across this branch cut, and for the same reason the definition of the derivative in the complex plane
    \begin{equation}
        \dv{f}{z} = \lim_{\Delta z \to 0} \frac{f(z + \Delta z) - f(z)}{\Delta z}
    \end{equation}
does not exist at the branch cut.

But \cref{fig:results_dlog} shows the relative error when applying QSD to $\Ln z$ with $h=10^{-20}$, zoomed in on the small region $-10^{-15} \leq a \leq 10^{-15}$, $-10^{-15} \leq b \leq 10^{-15}$. 
Two things stand out in the figure. 
Firstly, the singularity at $z=0$, for which the derivative $1/z$ has a pole, eventually causes the method to fail when $z \to 0$. However, even at a radius $r \approx 2 \cdot 10^{-16}$ the derivative is still computed with a relative error of only $\epsilon = 10^{-9}$, indicating excellent accuracy even near a singularity.
Secondly, the branch cut of $\Ln z$ is not visible in \cref{fig:results_dlog}. This is because the derivative is computed with only a single evaluation of $f(z + h \qqq)$, and since $-\pi < \Arg{z} \leq \pi$ includes $\pi$, the method does not experience ambiguity at the branch cut. 
This result might be unexpected considering that formally the derivative is not defined on the branch cut. 
However, this ambiguity arises precisely because the complex derivative is defined with a subtraction, which is the problem QSD was designed to avoid.
As the Riemann surface of $\Ln z$ clearly has a derivative of $1/z$ everywhere, $1/z$ is in fact the correct answer, albeit unexpected.

\section{Conclusion}\label{sec:conclusion}
Quaternionic Step Differentiation (QSD) generalizes the attractive properties of Complex Step Differentiation to holomorphic functions: it requires only a single function evaluation, the procedure is convergent, and standard mathematical operations are well defined for complex quaternions, and so any standard complex quaternion implementation facilitates QSD. To demonstrate this flexibility QSD implementations using $\spacealg$, complex quaternions and the matrix representations thereof were investigated. The method can therefore be implemented using the traditional tools of scientific computing, even in programming environments where currently no GA or AD support is available.

\section*{Acknowledgements}
The authors would like to thank Adil Han Orta, Peter C. Kroon and Steven De Keninck for valuable discussions about this research. 

\begin{appendices}
\section{Elementary Functions}\label{app:elementary_functions}

Many of the common transcendental functions take on particularly nice forms when they are considered over complex-quaternions. In this section closed form expressions for the exponential, logarithm, square root and (inverse) trigonometric functions will be given. 

\subsection{Exponents}
Of vital importance to a great number of elementary functions is the exponential function. 
For $w = a + b i + c \qqq + d \, i \qqq$ the exponential function can be greatly simplified by using $\comm{i}{\qqq} = 0$, and $i^2 = \qqq^2 = -1$, $(i \qqq)^2 = 1$:
    \begin{align}
        X &= e^{w} = e^a e^{b i} e^{c \qqq} e^{d \, i \qqq} \label{eq:exp_closed_form} \\
        &= e^a \bqty{\cos(b)+i\sin(b)}\bqty{\cos(c)+\qqq\sin(c)}\bqty{\cosh(d)+i \qqq\sinh(d)}. \notag
    \end{align}

An alternative derivation of the same result is to write $w$ more explicitly as a complex quaternion:
    \[w = (a + b i) + (c + di) \qqq\]
and using the quaternion exponentiation formula,
    \begin{align*}
        X &= e^w = e^{a+bi} e^{(c+di) \qqq} \\
        &= e^{a+bi} \bqty{\cos(c+d i) + \qqq \sin(c+di)}.
    \end{align*}

\subsection{Polar Decomposition}
Due to the commutativity of $i$ and $\qqq$, every invertible element
\[ X = e^{a + b i + (c + d i) \qqq} \] 
can trivially be decomposed into a unitary element $U = \exp(b i + c \qqq)$ and a scale factor $S = \exp(a + d\, i \qqq)$:
    \begin{align}
        X &= e^{a + b i + c \qqq + d i \qqq} = e^{a} e^{b i} e^{c \qqq}  e^{d \, i \qqq} \notag \\
        &= \pqty{ e^{b i} e^{c \qqq}} \pqty{e^{a} e^{d \, i \qqq}}  = U S,
    \end{align}
where $U \tilde{U} = 1$ and $\tilde{S} = S$ is self-reverse (Hermitian). 
The $\sim$ operation reverses the ordering of basis vectors, e.g.
    \[ \tilde{i} = e_{321} = - e_{123} = - i, \quad \tilde{\qqq} = - \qqq,\]
and corresponds to the Hermitian conjugation operation of linear algebra.
In order to find the decomposition, the first step is to realize that $X\tilde{X}$ only carries information about $S^2$:
    \begin{equation}
        X\tilde{X} = S U \tilde{U} \tilde{S} = S^2.
    \end{equation}
If the square root $S =\sqrt{S^2}$ can be found, then a decomposition into $S$ and $U$ is given by
    \[ U = X S^{-1}. \]

\begin{lemma}
        \begin{equation}
            S = \frac{X\tilde{X} + [[X\tilde{X}]]}{\sqrt{2} \sqrt{\expval{X\tilde{X}}_0 + [[X\tilde{X}]]}},
            \label{eq:polar_decomposition}
        \end{equation}
    with 
        \begin{equation}
            [[X\tilde{X}]] = \sqrt{\expval{X\tilde{X}}_0^2 - \expval{X\tilde{X}}_1^2},
            \label{eq:leonorm}
        \end{equation}
    where $\expval{M}_r$ selects the grade $r$ part of $M$.
\end{lemma}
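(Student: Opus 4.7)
The plan is to reduce the statement to elementary scalar algebra by exploiting that all the relevant exponents lie in a commutative sub-algebra. Because $i$ is central, and because $i\qqq$ commutes with $\qqq$ (explicitly $\qqq(i\qqq) = i\qqq^{2} = -i = (i\qqq)\qqq$), the elements $U = \exp(bi+c\qqq)$ and $S = \exp(a + d\,i\qqq)$ commute. Combined with $\widetilde{US} = \tilde{S}\tilde{U}$, $U\tilde{U}=1$ and $\tilde{S}=S$, this gives $X\tilde{X} = U S \tilde{S} \tilde{U} = S^{2} U \tilde{U} = S^{2}$, so it suffices to verify \eqref{eq:polar_decomposition} with $X\tilde{X}$ replaced by $S^{2}$.

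Next, set $v := i\qqq$. Using \eqref{eq:quaternions}, $v$ is the grade-$1$ vector $c_\qi e_{1} + c_\qj e_{2} + c_\qk e_{3}$, and $v^{2} = i^{2}\qqq^{2} = 1$. Since the scalar $a$ commutes with $d\,v$, the exponential factorizes as $S = e^{a}\pqty{\cosh d + v \sinh d} = \gamma + \delta v$ with $\gamma := e^{a}\cosh d$ and $\delta := e^{a}\sinh d$. A direct squaring gives $S^{2} = \pqty{\gamma^{2}+\delta^{2}} + 2\gamma\delta\, v$, so $\expval{S^{2}}_{0} = \gamma^{2} + \delta^{2}$ and $\expval{S^{2}}_{1} = 2\gamma\delta\, v$, whose square as an algebra element is the scalar $(2\gamma\delta)^{2}$ because $v^{2}=1$.

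The pivotal identity is then the classical factorization $\pqty{\gamma^{2}+\delta^{2}}^{2} - \pqty{2\gamma\delta}^{2} = \pqty{\gamma^{2}-\delta^{2}}^{2}$, from which \eqref{eq:leonorm} yields $[[S^{2}]] = \abs{\gamma^{2}-\delta^{2}} = e^{2a}$ by $\cosh^{2}d - \sinh^{2}d = 1$. Substituting into \eqref{eq:polar_decomposition}, the numerator becomes $S^{2} + \pqty{\gamma^{2}-\delta^{2}} = 2\gamma^{2} + 2\gamma\delta\, v = 2\gamma\, S$, while the denominator equals $\sqrt{2}\sqrt{\pqty{\gamma^{2}+\delta^{2}}+\pqty{\gamma^{2}-\delta^{2}}} = \sqrt{2}\sqrt{2\gamma^{2}} = 2\gamma$, using positivity of $\gamma = e^{a}\cosh d$. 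Their ratio is $S$, completing the argument.

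The main obstacle I anticipate is interpretive rather than computational: one must verify that $v = i\qqq$ is genuinely of grade $1$ with $v^{2}=+1$, because this is what allows $\expval{X\tilde{X}}_{1}^{2}$ in \eqref{eq:leonorm} to collapse to an ordinary non-negative scalar and lets every radical be taken unambiguously. Once this observation is in place, the rest is a short verification through hyperbolic identities.
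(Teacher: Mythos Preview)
Your proof is correct and follows essentially the same route as the paper: both reduce to $X\tilde{X}=S^{2}$, expand $S=e^{a}(\cosh d + i\qqq\,\sinh d)$, compute $[[S^{2}]]=e^{2a}$, and then verify the formula via hyperbolic double-angle identities. Your substitution $\gamma=e^{a}\cosh d$, $\delta=e^{a}\sinh d$ is only a cosmetic repackaging of the paper's direct use of $\cosh(2d)$ and $\sinh(2d)$, and your explicit check that $v=i\qqq$ is grade~$1$ with $v^{2}=1$ simply spells out what the paper takes for granted.
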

\begin{proof}
    A detailed derivation of Eq. \eqref{eq:polar_decomposition} is given in \cite{DorstDecomposition}, but Eq. \eqref{eq:polar_decomposition} can be validated by explicit calculation:
    \begin{align}
        X\tilde{X} &= e^{2a + 2 d i \qqq} = e^{2a} \pqty{\cosh(2d) + i \qqq \sinh(2d)}
    \end{align}
    and
    \begin{align}
        [[ X\tilde{X} ]] &= e^{2a}
    \end{align}
    Plugging this into \eqref{eq:polar_decomposition} yields
        \begin{align}
            S &= \frac{e^{2a} \pqty{e^{2 d i \qqq} + 1} }{\sqrt{2} \sqrt{e^{2a} \pqty{1 + \cosh(2d)}}} \notag \\
            &= \frac{e^a}{\sqrt{2}} \pqty{\frac{1 + \cosh(2d) + i \qqq \sinh(2d)}{\sqrt{2 \cosh^2(d)}}} \notag \\
            &= \frac{e^a}{2} \pqty{\frac{2 \cosh^2(d) + 2 i \qqq \cosh(d) \sinh(d)}{\cosh(d)}} \notag \\
            &= e^a \pqty{\cosh(d) + i \qqq \sinh(d)} = e^{a + d i\qqq}
        \end{align}
\end{proof}
Since $S = \expval{S}_0 + \expval{S}_1$, i.e. it only has a scalar and a vector part, its inverse is
\begin{equation}
    S^{-1} = \frac{\expval{S}_0 - \expval{S}_1}{\expval{S}_0^2 - \expval{S}_1^2}.
\end{equation}
Therefore $U$ is obtained as
\[ U = X S^{-1}. \]

\subsection{Square Root} \label{app:sqrt}
The square root of a unitary element $U$ is found using the polar decomposition \eqref{eq:polar_decomposition} of $1+U$, for
    \begin{equation}
        X = 1 + U = \sqrt{U} \pqty{\sqrt{\tilde{U}}+\sqrt{U}},
    \end{equation}
and $S = \pqty{\sqrt{\tilde{U}}+\sqrt{U}}$ is clearly self-reverse, so
    \begin{equation}
        \sqrt{U} = X S^{-1} = (1 + U) S^{-1}.
    \end{equation}
This also makes it clear how to take the square root of any element $X$, since
    \begin{equation}
        X^{1/2} = U^{1/2} S^{1/2}.
    \end{equation}
The remaining task is to find $S^{1/2}$. Up to the usual ambiguity in sign when dealing with square roots, $S^{1/2}$ is \cite{DorstDecomposition}
    \begin{equation}
        \sqrt{S} = \frac{S + [[S]] }{\sqrt{2} \sqrt{S_0 + [[S]] }}.
        \label{eq:sqrtS}
    \end{equation}
where $[[\ldots]]$ is defined in \cref{eq:leonorm}.
As demonstrated in \cite{DorstDecomposition} this square root is unique  when $S_0 + [[S]] > 0$. But since $S_0 = e^a \cosh(d)$ and $S_1 = e^a \sinh(d )i \qj$,
    \begin{align}
        S_0 + [[S]] &= e^a \cosh(d) + \sqrt{e^{2a} \pqty{\cosh^2(d) - \sinh^2(d)}} \notag \\
        &= e^a \pqty{\cosh(d) + 1} > 0,
    \end{align}
the condition $S_0 + [[S]] > 0$ is always met in QSD, making Eq. \eqref{eq:sqrtS} unique up to a sign.

The square root of an element $X = \exp(w) = \exp\pqty{a + b i + c \qqq + d \, i \qqq}$ as derived in this section is identical to
    \begin{equation}
        X^{1/2} = e^{w / 2} = e^{\pqty{a + b i + c \qqq + d \, i \qqq}/2},
    \end{equation}
as long as $-\pi < \sqrt{b^2 + c^2} \leq \pi$ \emph{except} when $U = \pm i \qqq$, since $X = 1 \pm i\qqq$ is non-invertible. In this case the principal logarithm of \cref{app:log} must be used to find the square root, which returns $\frac{\pi}{2} (i + \qqq)$.

\subsection{Logarithm} \label{app:log}
The principal logarithm of a number $X = e^{a + b i + c \qqq + d i \qqq}$ can be found using its polar decomposition \eqref{eq:polar_decomposition}:
    \begin{equation}
        \Ln(X) = \Ln(U) + \Ln(S).
    \end{equation}
Therefore the principal logarithms of $U$ and $S$ have to be defined.
Starting with $U$, since  $U = \exp(b i)\exp(c \qqq)$, expanding using Eulers formula yields
    \begin{align}
        \expval{U}_0 &= \cosh\pqty{b i} \cosh\pqty{c \qqq} = \cos\pqty{b} \cos\pqty{c}  \notag \\
        \expval{U}_1 &= \sinh\pqty{b i} \sinh\pqty{c \qqq} = \sin\pqty{b} \sin\pqty{c} i \qqq \notag \\
        \expval{U}_2 &= \cosh\pqty{b i} \sinh\pqty{c \qqq} = \cos\pqty{b} \sin\pqty{c} \qqq \notag \\
        \expval{U}_3 &= \sinh\pqty{b i} \cosh\pqty{c \qqq} = \sin\pqty{b} \cos\pqty{c} i, \label{eq:U_grades}
    \end{align}
where $U_r = \expval{U}_r$ selects the $r$-grade part of $U$.
To take the logarithm we will have to make use of the 2-argument arctangent $\atantwo$, which returns an angle in $(-\pi, \pi]$.

The inclusion of $\qqq$ introduces some ambiguity when defining the principal logarithm. For example, both $\exp\pqty{i \pi} = -1$ and $\exp\pqty{\qqq \pi} = -1$. In order to be backwards compatible with ordinary complex analysis we demand $\Ln\pqty{-1} = i \pi$. This can be done by finding the value of $c$ using the following lemma:
\begin{lemma}
    \begin{align*}
        \sin(2c) &= 2 \pqty{U_1 U_3 - U_0 U_2} \qqq \\
        \cos(2c) &= 2 \pqty{U_0^2 - U_3^2} - 1,
    \end{align*}
    from which it follows that 
    \begin{equation}
        c = \frac{1}{2} \atantwo\pqty{\sin(2c), \cos(2c)}.
    \end{equation}
\end{lemma}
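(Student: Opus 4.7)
The plan is to verify the two trigonometric identities by direct computation, substituting the grade-decomposed components of $U$ from \eqref{eq:U_grades} and exploiting the multiplication rules $i^2=\qqq^2=-1$, $(i\qqq)^2=1$, and $\comm{i}{\qqq}=0$. Since every $U_r$ is a product of a real scalar with either $1$, $i$, $\qqq$, or $i\qqq$, all subsequent products commute, and the calculation reduces to careful bookkeeping of signs coming from $i^2$ and $\qqq^2$.

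First I would compute the product $U_1 U_3$ using $\expval{U}_1 = \sin(b)\sin(c)\, i\qqq$ and $\expval{U}_3 = \sin(b)\cos(c)\, i$. Since $i\qqq \cdot i = i^2 \qqq = -\qqq$, this yields $U_1 U_3 = -\sin^2(b)\sin(c)\cos(c)\,\qqq$. Next I would compute $U_0 U_2 = \cos^2(b)\sin(c)\cos(c)\,\qqq$. Subtracting and invoking $\sin^2(b)+\cos^2(b)=1$ collapses the $b$-dependence, giving
\begin{equation*}
    U_1 U_3 - U_0 U_2 = -\sin(c)\cos(c)\,\qqq = -\tfrac{1}{2}\sin(2c)\,\qqq.
\end{equation*}
Multiplying both sides by $2\qqq$ on the right and using $\qqq^2=-1$ produces the first identity $\sin(2c) = 2(U_1 U_3 - U_0 U_2)\qqq$.

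For the cosine identity I would square $U_0 = \cos(b)\cos(c)$ and $U_3 = \sin(b)\cos(c)\,i$, observing that $i^2=-1$ flips the sign of $U_3^2$, so
\begin{equation*}
    U_0^2 - U_3^2 = \cos^2(b)\cos^2(c) + \sin^2(b)\cos^2(c) = \cos^2(c).
\end{equation*}
The double-angle identity $\cos(2c) = 2\cos^2(c) - 1$ then yields the second claim directly.

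Finally, with both $\sin(2c)$ and $\cos(2c)$ expressed as explicit combinations of the grade components of $U$, the relation $c = \tfrac{1}{2}\atantwo(\sin(2c),\cos(2c))$ is immediate from the defining property of the two-argument arctangent, provided $2c \in (-\pi,\pi]$, i.e. $c$ lies in the principal half-period that selects the principal branch of $\Ln$. I do not anticipate any real obstacle beyond the routine algebra; the only subtlety worth flagging is that the right-multiplication by $\qqq$ in the $\sin(2c)$ identity is essential (since $U_1 U_3 - U_0 U_2$ is a pure $\qqq$-bivector), and that the principal-branch restriction on $c$ must be invoked to justify inverting the double-angle on the last line.
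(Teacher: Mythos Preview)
Your proposal is correct and follows exactly the approach the paper indicates: substitute the grade components from \eqref{eq:U_grades} and invoke the identities $\sin^2\theta+\cos^2\theta=1$, $2\sin\theta\cos\theta=\sin(2\theta)$, and $2\cos^2\theta-1=\cos(2\theta)$. The paper's own proof merely sketches these ingredients, while you carry out the bookkeeping with the $i$, $\qqq$, $i\qqq$ factors explicitly; there is nothing substantively different.
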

\begin{proof}
The proof follows straightforwardly from \eqref{eq:U_grades} and the trigonometric identities $\cos^2\theta + \sin^2\theta = 1$, $2 \cos\theta \sin\theta = \sin(2\theta)$ and $2 \cos^2(\theta) = 1 + \cos(2\theta)$.
\end{proof}

This limits the domain of $c$ to $(- \frac{1}{2}\pi, \frac{1}{2}\pi]$, thus removing the ambiguity surrounding the logarithm of $-1$, while at the same time still permitting $c = h \approx 0$ as it should be in QSD. With the value of $c$ at hand, $b \in (-\pi, \pi]$ is now calculated as
    \begin{equation}
        b = \atantwo\pqty{\sin(b), \cos(b)},
    \end{equation}
where
    \begin{align*}
        \sin b &= \begin{cases}
            \frac{U_3}{\cos(c) i} & \abs{\cos(c)} > \frac{1}{\sqrt{2}} \\
            \frac{U_1}{\sin(c) i \qqq} & \text{otherwise}
        \end{cases} \\
        \cos b &= \begin{cases}
            \frac{U_0}{\cos(c)} & \abs{\cos(c)} > \frac{1}{\sqrt{2}} \\
            \frac{U_2}{\sin(c) \qqq} & \text{otherwise}
        \end{cases}.
    \end{align*}
Switching the cases is technically only required when $\cos(c) = 0$, but doing so early avoids numerical issues when $\cos(c) \to 0$ and circumvents choosing an arbitrary epsilon value for when $\cos(c)$ is considered 0.

The logarithm of $S$ is more straightforwardly obtained by expanding 
\[S = e^a e^{d\, i \qqq}\] 
using Eulers formula, and solving for $a$ and $d i \qqq$:
    \begin{equation}
        a = \frac{1}{2} \ln(S_0^2 - S_1^2), \quad d i \qqq = \arctanh \pqty{\frac{S_1}{S_0}}.
    \end{equation}

Putting the principal logarithms of $U$ and $S$ together a principal logarithm for $X$ is obtained.
As a principal logarithm it is by no means unique, in fact it is $2 \pi$ periodic in both $i$ and $\qqq$.

\subsection{(Inverse) Trigonometric Functions} \label{app:trig}
Trigonometric functions of complex-quaternions
    \begin{equation}
        w = (a + b i) + \qj (c + d i) = z_1 + \qj z_2
    \end{equation}
are defined by their exponential forms:
    \begin{equation}
        \sin{w} = \frac{e^{iw} - e^{-iw}}{2 i}, \quad \cos{w} = \frac{e^{iw} + e^{-iw}}{2}, \quad \tan{w} = \frac{\sin{w}}{\cos{w}}.
    \end{equation}

Inverse trigonometric functions are defined using their logarithmic forms as familiar from complex analysis:
    \begin{align}
        \arcsin{w} &= - i \ln \pqty{i w + \sqrt{1 - w^2}}\\ 
        \arccos{w} &= - i \ln \pqty{w + \sqrt{w^2 - 1}} \\
        \arctan{w} &= \frac{i}{2} \bqty{\ln \pqty{1 - i w} - \ln \pqty{1 + i w}},
    \end{align}
where the square roots and logarithms are implemented using the techniques detailed in \cref{app:sqrt} and \cref{app:log} respectively.

\end{appendices}

\bibliographystyle{plainnat}
\bibliography{biblio}

\begin{thebibliography}{24}
\providecommand{\natexlab}[1]{#1}
\providecommand{\url}[1]{\texttt{#1}}
\expandafter\ifx\csname urlstyle\endcsname\relax
  \providecommand{\doi}[1]{doi: #1}\else
  \providecommand{\doi}{doi: \begingroup \urlstyle{rm}\Url}\fi

\bibitem[Ablowitz and Fokas(2003)]{ablowitz_fokas_2003}
M.~J. Ablowitz and A.~S. Fokas.
\newblock \emph{Complex Variables: Introduction and Applications}.
\newblock Cambridge Texts in Applied Mathematics. Cambridge University Press, 2
  edition, 2003.
\newblock \doi{10.1017/CBO9780511791246}.

\bibitem[Al-Mohy and Nigham(2010)]{higham}
A.~H. Al-Mohy and N.~J. Nigham.
\newblock The complex step approximation to the {F}r{\'e}chet derivative of a
  matrix function.
\newblock \emph{Numerical Algorithms}, 53:\penalty0 133--148, 2010.

\bibitem[Arsenovic et~al.()Arsenovic, Hadfield, Wieser, Kern, and {The Pygae
  Team}]{python_clifford}
A.~Arsenovic, H.~Hadfield, E.~Wieser, R.~Kern, and {The Pygae Team}.
\newblock pygae/clifford.
\newblock URL \url{https://doi.org/10.5281/zenodo.1453978}.

\bibitem[Balzani et~al.(2015)Balzani, Gandhi, Tanaka, and
  Schr{\"o}der]{balzani}
D.~Balzani, A.~Gandhi, M.~Tanaka, and J.~Schr{\"o}der.
\newblock Numerical calculation of thermo-mechanical problems at large strains
  based on complex step derivative approximation of tangent stiffness matrices.
\newblock \emph{Computational Mechanics}, 55:\penalty0 861--871, 2015.

\bibitem[Bargmann(1947)]{Bargmann}
V.~Bargmann.
\newblock Irreducible unitary representations of the {L}orentz group.
\newblock \emph{Annals of Mathematics}, 48\penalty0 (3):\penalty0 568--640,
  1947.

\bibitem[Bornemann(2010)]{bornemann}
F.~Bornemann.
\newblock Accuracy and stability of computing high-order derivatives of
  analytic functions by {C}auchy integrals.
\newblock \emph{Foundations of Computational Mathematics}, 11\penalty0
  (1):\penalty0 1--63, July 2010.
\newblock \doi{10.1007/s10208-010-9075-z}.

\bibitem[Casado and Hewson(2020)]{casado}
J.~M.~V. Casado and R.~Hewson.
\newblock Algorithm 1008: Multicomplex number class for matlab, with a focus on
  the accurate calculation of small imaginary terms for multicomplex step
  sensitivity calculations.
\newblock \emph{ACM Trans. Math. Softw.}, 46:\penalty0 1--26, 2020.

\bibitem[Cossette et~al.(2020)Cossette, Walsh, and Forbes]{cossette}
C.~C. Cossette, A.~Walsh, and J.~R. Forbes.
\newblock The complex-step derivative approximation on matrix {L}ie groups.
\newblock \emph{IEEE Robotics and Automation Letters}, 5\penalty0 (2):\penalty0
  906--913, 2020.

\bibitem[Dorst and Valkenburg(2011)]{DorstDecomposition}
L.~Dorst and R.~Valkenburg.
\newblock \emph{Square Root and Logarithm of Rotors in 3D Conformal Geometric
  Algebra Using Polar Decomposition}, pages 81--104.
\newblock 01 2011.
\newblock \doi{10.1007/978-0-85729-811-9_5}.

\bibitem[Dorst et~al.(2009)Dorst, Fontijne, and Mann]{GA4CS}
L.~Dorst, D.~Fontijne, and S.~Mann.
\newblock \emph{Geometric Algebra for Computer Science: An Object-Oriented
  Approach to Geometry}.
\newblock Morgan Kaufmann Publishers Inc., San Francisco, CA, USA, 2009.
\newblock ISBN 9780080553108.

\bibitem[Hestenes and Lasenby(2015)]{STA}
D.~Hestenes and A.~Lasenby.
\newblock \emph{Space-time Algebra, second edition}.
\newblock 01 2015.
\newblock \doi{10.1007/978-3-319-18413-5}.

\bibitem[Higham(2008)]{Higham:2008:FM}
N.~J. Higham.
\newblock \emph{Functions of Matrices: {Theory} and Computation}.
\newblock Society for Industrial and Applied Mathematics, Philadelphia, PA,
  USA, 2008.
\newblock ISBN 978-0-898716-46-7.

\bibitem[Lai and Crassidis(2008)]{lai}
K.~L. Lai and J.~L. Crassidis.
\newblock Extensions of the first and second complex-step derivative
  approximations.
\newblock \emph{J. Comput. Appl. Math.}, 219:\penalty0 276--293, 2008.

\bibitem[Lantoine et~al.(2012)Lantoine, Russell, and Dargent]{Lantoine}
G.~Lantoine, R.~P. Russell, and T.~Dargent.
\newblock Using multicomplex variables for automatic computation of high-order
  derivatives.
\newblock \emph{ACM Trans. Math. Softw.}, 38\penalty0 (3), April 2012.
\newblock ISSN 0098-3500.
\newblock \doi{10.1145/2168773.2168774}.
\newblock URL \url{https://doi.org/10.1145/2168773.2168774}.

\bibitem[Lyness(1967)]{lyness}
J.~N. Lyness.
\newblock Numerical algorithms based on the theory of complex variable.
\newblock In \emph{Proceedings of the 1967 22nd National Conference}, ACM '67,
  page 125–133, New York, NY, USA, 1967. Association for Computing Machinery.
\newblock ISBN 9781450374941.
\newblock \doi{10.1145/800196.805983}.
\newblock URL \url{https://doi.org/10.1145/800196.805983}.

\bibitem[Lyness and Moler(1967)]{Moler}
J.~N. Lyness and C.~B. Moler.
\newblock Numerical differentiation of analytic functions.
\newblock \emph{SIAM Journal on Numerical Analysis}, 4\penalty0 (2):\penalty0
  202--210, 1967.
\newblock \doi{10.1137/0704019}.
\newblock URL \url{https://doi.org/10.1137/0704019}.

\bibitem[Martins et~al.(2003)Martins, Sturdza, and Alonso]{martins}
J.~Martins, P.~Sturdza, and J.~Alonso.
\newblock The complex-step derivative approximation.
\newblock \emph{ACM Trans. Math. Softw.}, 29:\penalty0 245--262, 2003.

\bibitem[Martins et~al.(2001)Martins, Sturdza, and Alonso]{martins2}
J.~R. R.~A. Martins, P.~Sturdza, and J.~J. Alonso.
\newblock The connection between the complex-step derivative approximation and
  algorithmic differentiation.
\newblock In \emph{Proceedings of the 39th AIAA Aerospace Sciences Meeting and
  Exhibit}. 2001.
\newblock Paper AIAA-2001-0921.

\bibitem[Meurer et~al.(2017)Meurer, Smith, Paprocki, \v{C}ert\'{i}k, Kirpichev,
  Rocklin, Kumar, Ivanov, Moore, Singh, Rathnayake, Vig, Granger, Muller,
  Bonazzi, Gupta, Vats, Johansson, Pedregosa, Curry, Terrel, Rou\v{c}ka, Saboo,
  Fernando, Kulal, Cimrman, and Scopatz]{SymPy}
A.~Meurer, C.~P. Smith, M.~Paprocki, O.~\v{C}ert\'{i}k, S.~B. Kirpichev,
  M.~Rocklin, A.~Kumar, S.~Ivanov, J.~K. Moore, S.~Singh, T.~Rathnayake,
  S.~Vig, B.~E. Granger, R.~P. Muller, F.~Bonazzi, H.~Gupta, S.~Vats,
  F.~Johansson, F.~Pedregosa, M.~J. Curry, A.~R. Terrel, \v{S}. Rou\v{c}ka,
  A.~Saboo, I.~Fernando, S.~Kulal, R.~Cimrman, and A.~Scopatz.
\newblock Sympy: symbolic computing in python.
\newblock \emph{PeerJ Computer Science}, 3:\penalty0 e103, January 2017.
\newblock ISSN 2376-5992.
\newblock \doi{10.7717/peerj-cs.103}.
\newblock URL \url{https://doi.org/10.7717/peerj-cs.103}.

\bibitem[Price(1991)]{price}
G.~B. Price.
\newblock \emph{An introduction to multicomplex spaces and functions}.
\newblock Marcel Dekker Inc., New York, 1991.

\bibitem[Squire and Trapp(1998)]{squire}
W.~Squire and G.~Trapp.
\newblock Using complex variables to estimate derivatives of real functions.
\newblock \emph{SIAM Review}, 40:\penalty0 110--112, 1998.

\bibitem[Turner(2002)]{turner}
J.~D. Turner.
\newblock Quaternion-based partial derivative and state transition matrix
  calculations for design optimization.
\newblock In \emph{Proceedings of the 40th AIAA Aerospace Sciences Meeting and
  Exhibit}. 2002.
\newblock Paper AIAA-2002-0448.

\bibitem[{Virtanen} et~al.(2020){Virtanen}, {Gommers}, {Oliphant}, {Haberland},
  {Reddy}, {Cournapeau}, {Burovski}, {Peterson}, {Weckesser}, {Bright}, {van
  der Walt}, {Brett}, {Wilson}, {Jarrod Millman}, {Mayorov}, {Nelson}, {Jones},
  {Kern}, {Larson}, {Carey}, {Polat}, {Feng}, {Moore}, {VanderPlas}, {Laxalde},
  {Perktold}, {Cimrman}, {Henriksen}, {Quintero}, {Harris}, {Archibald},
  {Ribeiro}, {Pedregosa}, {van Mulbregt}, and {Contributors}]{2020SciPy}
P.~{Virtanen}, R.~{Gommers}, T.~E. {Oliphant}, M.~{Haberland}, T.~{Reddy},
  D.~{Cournapeau}, E.~{Burovski}, P.~{Peterson}, W.~{Weckesser}, J.~{Bright},
  S.~J. {van der Walt}, M.~{Brett}, J.~{Wilson}, K.~{Jarrod Millman},
  N.~{Mayorov}, A.~R.~J. {Nelson}, E.~{Jones}, R.~{Kern}, E.~{Larson},
  CJ~{Carey}, {\.I}.~{Polat}, Y.~{Feng}, E.~W. {Moore}, J.~{VanderPlas},
  D.~{Laxalde}, J.~{Perktold}, R.~{Cimrman}, I.~{Henriksen}, E.~A. {Quintero},
  C.~R. {Harris}, A.~M. {Archibald}, A.~H. {Ribeiro}, F.~{Pedregosa}, P.~{van
  Mulbregt}, and SciPy 1.~0 {Contributors}.
\newblock {SciPy 1.0: Fundamental Algorithms for Scientific Computing in
  Python}.
\newblock \emph{Nature Methods}, 17:\penalty0 261--272, 2020.
\newblock \doi{https://doi.org/10.1038/s41592-019-0686-2}.

\bibitem[Weinberg(2005)]{Weinberg:1995mt}
S.~Weinberg.
\newblock \emph{{The Quantum theory of fields. Vol. 1: Foundations}}.
\newblock Cambridge University Press, Cambridge, 2005.
\newblock ISBN 978-0-521-67053-1, 978-0-511-25204-4.

\end{thebibliography}

\end{document}